\DeclareMathOperator{\RE}{Re}
\newtheorem{theorem}{Theorem}
\newtheorem{corollary}{Corollary}[theorem]
\newtheorem{lemma}[theorem]{Lemma}
\newtheorem*{remark}{Remark}
\newtheorem*{assumption}{Assumption}
\begin{document}


\title{Sharp Estimate of Fifth Coefficient  for Ma Minda Starlike and Convex Functions}
\author{Surya Giri and S. Sivaprasad Kumar}
\date{}

\maketitle





\begin{abstract}
    \noindent  In the present investigation, we find the sharp bound of fifth coefficient of analytic normalized function $f$ satisfying $z f'(z)/f(z) \prec \varphi(z)$ when coefficients of $\varphi$ satisfy certain conditions. For an appropriate choice of $\varphi$, the already known estimates for various other subclasses of starlike functions follow directly from the obtained result.
\end{abstract}
\vspace{0.5cm}
	\noindent \textit{Keywords:} Starlike functions; Convex Functions; Fifth Coefficient.\\
	\noindent \textit{AMS Subject Classification:} 30C45, 30C50, 30C55.





%

\section{Introduction}\label{sec1}

Let $\mathcal{A}$ be the class of analytic functions of the form
    $ f(z)=z+\sum_{n=2}^\infty a_n z^{n} ,$
   in the unit disk $\mathbb{D}=\{ z \in \mathbb{C}: \vert z \vert < 1 \} $.
   Suppose $\overline{\mathbb{D}}=\{ z\in\mathbb{C}: \vert z \vert \leq 1 \}$ and $\mathbb{T}=\{ z\in\mathbb{C}: \vert z \vert=1 \}$. Further, we denote the subclass of $\mathcal{A}$ containing univalent functions by $\mathcal{S}$. In 1916, Bieberbach conjectured that $\vert a_n \vert \leq n$ for $f\in \mathcal{S}$, which was settled in 1985 by L. D. Branges (see~\cite[Page 5]{VAllu}). During this time period, the conjecture was verified for various other subclasses of $\mathcal{S}$. Since  the growth, covering and distortion theorems for functions $f\in \mathcal{S}$ can be proved using the fact $\lvert  a_2 \lvert  \leq 2,$ so the coefficient bound plays a major role in identifying the geometric nature of the function.

   Undoubtedly, the primary and extensively studied subclasses of $\mathcal{S}$ are the classes of starlike and convex functions, respectively denoted by $\mathcal{S}^*$ and $\mathcal{C}$. Analytically, a function $f\in \mathcal{S}^*$ if and only if $\RE (z f'(z))/f(z))>0$ and $f\in \mathcal{C}$ if and only if $\RE (1 + (z f''(z))/f'(z))>0$ for $z\in \mathbb{D}$.

    An analytic function $f$ is subordinate to another analytic function $g$, if there exists a Schwarz function  $\omega$ such that   $f(z)=g(\omega(z))$ for all $z\in \mathbb{D}$ simply denoted by $f \prec g$. By $\mathcal{B}_0$, we represent the class of Schwarz functions having the form
\begin{equation}\label{omega}
    \omega(z)=\sum_{n=1}^\infty c_n z^n.
\end{equation}
  Using the concept of subordination, Ma and Minda \cite{1} introduced the classes
\begin{equation*}
    \mathcal{S}^*(\varphi)=\bigg\{ f\in\mathcal{A} : \frac{z f'(z)}{f(z)}\prec \varphi(z)\bigg\} \\
\end{equation*}
   and
\begin{equation*}\label{varphi(z)}
  \mathcal{C}(\varphi)=\bigg\{ f\in\mathcal{A} : 1 + \frac{z f''(z)}{f'(z)}\prec \varphi(z)\bigg\},
\end{equation*}
    where $\varphi$ is an analytic univalent function in $\mathbb{D}$ satisfying (i) $\RE \varphi(z)>0$  for $z\in \mathbb{D}$, (ii) $\varphi'(0)>0$, (iii) $\varphi(\mathbb{D})$ is a starlike domain with respect to $\varphi(0)=1$ and (iv) $\varphi(\mathbb{D})$ is symmetric about the real axis. Suppose $\varphi$ has the following series expansion
\begin{equation}\label{ma-minda}
    \varphi(z)=1+B_1 z+B_2 z^2+B_3 z^3+\cdots, \quad B_1>0.
\end{equation}
     Since $\varphi(\mathbb{D})$ is symmetric with respect to the real axis and $\varphi(0)=1$, we have $\overline{\varphi{(\bar{z})}}=\varphi(z)$, which yields that all $B_i$'s are real. For the family $\mathcal{S}^*(\varphi)$ and $\mathcal{C}(\varphi)$ sharp growth, distortion theorems and   estimates for the coefficient functional $\lvert  a_3 -r a_2^2 \rvert $ are known, where $r\in\mathbb{R}$ \cite{1}. In fact, these classes  unify several   subfamilies of starlike and convex functions. For instance, if
     $\varphi(z)=(1+(1-2\alpha)z)/(1-z)$, the classes $\mathcal{S}^*(\varphi)$ and $\mathcal{C}(\varphi)$ reduce to the classes of starlike functions of order $\alpha$ and convex functions of order $\alpha$, denoted by $\mathcal{S}^*(\alpha)$ and $\mathcal{C}(\alpha)$ respectively ($0\leq\alpha < 1 $)~(see~\cite{8}).  In case of $\alpha=0$, we simply obtain $\mathcal{S}^*(0) := \mathcal{S}^*$ and $\mathcal{C}(0) := \mathcal{C}$. If we define $\varphi(z) = \sqrt{1+z}$, then the class $\mathcal{S}^*(\varphi)$ coincides with the class $\mathcal{S}^*_L$ introduced by Sok\'{o}\l\ and Stankiewicz~\cite{21}. Geometrically, a function $f\in \mathcal{S}^*_L$ if and only if $z f'(z)/f(z)$ lies in the region bounded by the right lemniscate of Bernoulli given by $\vert w^2 -1 \vert < 1.$ Therefore $\mathcal{S}^*_L = \{ f \in \mathcal{A} : \vert (zf'(z)/f(z))^2 -1 \vert < 1\}.$ Mendiratta et al.~\cite{31} considered the class $\mathcal{S}^*_{RL}$ of functions $f$ such that the quantity $z f'(z)/f(z)$ lies in the interior of the left half of the shifted lemniscate of Bernoulli given by $\Omega_{RL} =\{ w \in \mathbb{C}: \RE w>0, \vert ( w - \sqrt{2})^2 - 1 \vert < 1$. Note that, the function $\varphi(z)= \sqrt{2}-(\sqrt{2}-1)\sqrt{( 1 - z)/( 1 + 2 (\sqrt{2} - 1 ) z)}$ maps the unit disk onto $\Omega_{RL}$. Thus
     $$ \mathcal{S}^*_{RL} = \bigg\{ f\in \mathcal{A} : \frac{z f'(z)}{f(z)} \prec   \sqrt{2}-(\sqrt{2}-1)\sqrt{\frac{1-z}{1+2(\sqrt{2}-1)z}} \bigg\} .$$
     Using this approach, various interesting subclasses of starlike functions by confining the values of $zf'(z)/f(z)$ to a defined region within the right half-plane  were introduced and studied.
     Some of them are listed in  table 1 along with their respective class notations.
\begin{table}[ht]
  \label{tb1}
  \caption{Subclasses of starlike functions} 
  \centering 
  \begin{tabular}{lll} 
  \hline 
    {\bf{Class}}  & \textbf{$\varphi(z)$} &  \textbf{Reference}      \\ [0.5ex] 
    \hline 
      $\mathcal{S}^*_{\sin}$                &  $ 1 + \sin{z}$  &  \cite{Cho2} Cho et al.   \\
     $\mathcal{S}^*_{SG}$    &   $2/(1+e^{-z})$   &  \cite{9}  Goel and Kumar   \\
     $\mathcal{S}^*_{\varrho}$   &   $1+z e^z$   &   \cite{18}   Kumar and  Gangania   \\
     $\mathcal{S}^*_{q_b}$   &   $\sqrt{1+b z}, \quad b\in(0,1]$  & \cite{28}   Sok\'{o}\l   \\
    \hline 
  \end{tabular}
\end{table}

      Note that the sharp bounds for the initial coefficients $\lvert a_2\rvert ,$   $\lvert  a_3 \rvert $ and $\lvert  a_4 \rvert $ of functions in  $\mathcal{S}^*(\varphi)$ for a general choice of $\varphi$ have been established~(see~\cite{12,38}), however, for $\lvert  a_5\rvert$, it is still open.  Sok\'{o}\l \ \cite{10} conjectured that $\lvert a_n \rvert\leq 1/(2(n-1))$ for $f\in \mathcal{S}^*_L$ and Mendiratta et al. \cite{31} conjectured that $\lvert a_n \rvert \leq (5-3 \sqrt{2})/(2(n-1))$ for $f\in \mathcal{S}^*_{RL}$. Ravichandran and Verma \cite{5} settled these conjectures for $n=5$. For different subclasses of $\mathcal{S}^*$ depending on the different choices of $\varphi(z)$ in $\mathcal{S}^*(\varphi)$, the bound for $\lvert a_5\rvert$ is known \cite{19,9,18}. For $n\geq 5$, finding the bound of $\lvert a_n\rvert$ for functions belonging to the classes $\mathcal{S}^*(\varphi)$ and $\mathcal{C}(\varphi)$ in case of general $\varphi$ is still an open problem \cite{OP}. In this study, we  obtain sharp bound of fifth coefficient for functions in $S^*(\varphi)$ and $\mathcal{C}(\varphi)$ under specific conditions on the coefficients of $\varphi$. The obtained results give several new special cases and some already known results as special cases.

 We require the following lemmas  to prove our results. Let   $\mathcal{P}$ be the   Carath\'{e}odory class containing   analytic functions  of the form
\begin{equation}\label{p(z)}
    p(z)=1+\sum_{n=1}^\infty p_n z^n, \quad z\in \mathbb{D}
\end{equation}
    with $\RE{p(z)}>0$.
    Clearly, the function
\begin{equation}\label{L(z)}
    L (z)=\frac{1+z}{1-z}, \quad z\in\mathbb{D}
\end{equation}
    is a member of $\mathcal{P}$ as it maps the   unit disk on to the right half-plane.
   \begin{lemma} \label{L2}
    \cite[Lemma I]{23} If the functions $1+\sum_{n=1}^\infty b_n z^n$ and $1+\sum_{n=1}^\infty c_n z^n$ are in $\mathcal{P}$, then the same holds for the function
       $$1+\frac{1}{2}\sum_{n=1}^\infty b_n c_n z^n.$$
\end{lemma}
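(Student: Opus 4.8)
\emph{Proof idea.} The plan is to reduce the statement to the Herglotz representation of Carath\'eodory functions together with the positivity of a single kernel. Recall that every $p\in\mathcal{P}$ with $p(0)=1$ can be written as $p(z)=\int_{\mathbb{T}}\frac{1+\zeta z}{1-\zeta z}\,d\mu(\zeta)$ for a suitable probability measure $\mu$ on $\mathbb{T}$; expanding $\frac{1+\zeta z}{1-\zeta z}=1+2\sum_{n\ge 1}\zeta^n z^n$ identifies the Taylor coefficients as the moments $p_n=2\int_{\mathbb{T}}\zeta^n\,d\mu(\zeta)$. So I would first fix representing probability measures $\mu,\nu$ on $\mathbb{T}$ for the two given functions $P(z)=1+\sum_{n\ge 1}b_n z^n$ and $Q(z)=1+\sum_{n\ge 1}c_n z^n$, so that $b_n=2\int_{\mathbb{T}}x^n\,d\mu(x)$ and $c_n=2\int_{\mathbb{T}}y^n\,d\nu(y)$, and hence $\tfrac12 b_n c_n=2\int_{\mathbb{T}}\int_{\mathbb{T}}(xy)^n\,d\mu(x)\,d\nu(y)$.

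Set $R(z):=1+\tfrac12\sum_{n\ge1}b_n c_n z^n$. Since $|b_n|\le 2$ and $|c_n|\le 2$ (the classical bound for $\mathcal{P}$), we have $|\tfrac12 b_n c_n|\le 2$, so $R$ is analytic on $\mathbb{D}$, and on each disk $|z|\le r<1$ the double series is dominated by $\sum_{n\ge1} r^n<\infty$. Thus Fubini's theorem lets me interchange summation and integration and sum the geometric series (valid because $|xyz|=|z|<1$):
\[
R(z)=1+2\int_{\mathbb{T}}\!\int_{\mathbb{T}}\sum_{n\ge1}(xyz)^n\,d\mu(x)\,d\nu(y)=1+2\int_{\mathbb{T}}\!\int_{\mathbb{T}}\frac{xyz}{1-xyz}\,d\mu(x)\,d\nu(y).
\]
Using the identity $1+2\dfrac{w}{1-w}=\dfrac{1+w}{1-w}$ with $w=xyz$, this becomes
\[
R(z)=\int_{\mathbb{T}}\!\int_{\mathbb{T}}\frac{1+xyz}{1-xyz}\,d\mu(x)\,d\nu(y).
\]

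To finish, note that for $x,y\in\mathbb{T}$ the product $xy$ again lies on $\mathbb{T}$, so $\RE\frac{1+xyz}{1-xyz}=\frac{1-|z|^2}{|1-xyz|^2}>0$ for every $z\in\mathbb{D}$. Because $\mu\times\nu$ is a probability measure on $\mathbb{T}\times\mathbb{T}$, integrating this strictly positive quantity gives $\RE R(z)>0$ on $\mathbb{D}$, and since $R(0)=1$ we conclude $R\in\mathcal{P}$, which is exactly the assertion. The main points that need checking are the normalization of the representing measures (forced by $p(0)=1$) and the legitimacy of the Fubini interchange; both follow at once from the uniform coefficient bound $|p_n|\le 2$, so I do not expect a real obstacle — the substance of the lemma is just the positivity of the kernel $\frac{1+xyz}{1-xyz}$.
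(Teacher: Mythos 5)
Your argument is correct: the Herglotz representation gives $b_n=2\int_{\mathbb{T}}x^n\,d\mu(x)$ and $c_n=2\int_{\mathbb{T}}y^n\,d\nu(y)$ with probability measures (normalized because $p(0)=1$), the Fubini interchange is justified by the uniform bound $\lvert b_nc_n\rvert\le 4$, and the positivity of $\RE\frac{1+xyz}{1-xyz}=\frac{1-\lvert z\rvert^2}{\lvert 1-xyz\rvert^2}$ for $xy\in\mathbb{T}$ finishes the proof. The paper itself offers no proof of this lemma, only the citation to \cite{23}; your Herglotz/kernel-positivity argument is the standard (and essentially the original) proof of this convolution result, so there is nothing to object to.
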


\begin{lemma}\label{A_4}
    \cite[Lemma II]{23} Let $h(z)=1+u_1 z+u_2 z^2+\cdots$ and $1+G(z)=1+d_1 z+d_2 z^2+\cdots$ be functions in $\mathcal{P}$, and set
     $$\gamma_n=\frac{1}{2^n}\left[ 1 + \frac{1}{2}\sum_{k=1}^n{n \choose k}u_k \right], \quad \gamma_0=1.$$
     If $A_n$ is defined by
     $$\sum_{n=1}^\infty (-1)^{n+1}\gamma_{n-1}G^n (z)=\sum_{n=1}^\infty A_n z^n,$$
     then $\lvert A_n\rvert\leq 2.$
\end{lemma}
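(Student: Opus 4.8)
The plan is to replace the combinatorial definition of the numbers $\gamma_n$ by an integral representation, identify the associated generating function, and then exhibit $\sum_{n\ge1}A_n z^{n}$ as an average over $\mathbb{T}$ of functions, each of which is a half--plane (hence convex) conformal map composed with one fixed Schwarz function; the estimate $|A_n|\le 2$ then follows at once from the classical fact that $f=\sum a_n z^{n}\prec g=\sum b_n z^{n}$ with $g$ convex univalent forces $|a_n|\le|b_1|$ for all $n$ (Rogosinski).

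First I would invoke the Herglotz representation $h(z)=\int_{\mathbb{T}}\frac{1+xz}{1-xz}\,d\mu(x)$ with $\mu$ a probability measure on $\mathbb{T}$, so that $u_k=2\int_{\mathbb{T}}x^{k}\,d\mu(x)$ for $k\ge1$. Substituting into the definition of $\gamma_n$ and using $\sum_{k=0}^{n}\binom{n}{k}x^{k}=(1+x)^{n}$ collapses the inner sum, giving
\begin{equation*}
\gamma_n=\int_{\mathbb{T}}\Bigl(\tfrac{1+x}{2}\Bigr)^{n}\,d\mu(x),\qquad n\ge0 .
\end{equation*}
Since $|(1+x)/2|\le1$ on $\mathbb{T}$, summing a geometric series yields, for $|w|<1$,
\begin{equation*}
\Gamma(w):=\sum_{n=0}^{\infty}\gamma_n w^{n}=\int_{\mathbb{T}}\frac{2\,d\mu(x)}{2-(1+x)w}.
\end{equation*}
Because $G(0)=0$, near the origin the series $\sum_{n\ge1}(-1)^{n+1}\gamma_{n-1}G^{n}$ converges absolutely and equals $G(z)\,\Gamma(-G(z))$, so
\begin{equation*}
\sum_{n=1}^{\infty}A_n z^{n}=\int_{\mathbb{T}}\frac{2\,G(z)}{2+(1+x)\,G(z)}\,d\mu(x).
\end{equation*}

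Next I would use the hypothesis $1+G\in\mathcal{P}$: writing $\omega=G/(2+G)$ (the Cayley transform of $1+G$, hence a Schwarz function) one has $G=2\omega/(1-\omega)$, and a one--line computation reduces the integrand to
\begin{equation*}
\frac{2\,G(z)}{2+(1+x)\,G(z)}=\frac{2\,\omega(z)}{1+x\,\omega(z)}=g_x(\omega(z)),\qquad g_x(u):=\frac{2u}{1+xu}.
\end{equation*}
For each $x\in\mathbb{T}$ the Möbius map $g_x$ carries $\mathbb{D}$ conformally onto a half--plane containing $0$, so it is convex univalent with $g_x(0)=0$ and $g_x'(0)=2$. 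Since $\omega$ is a Schwarz function, $g_x\circ\omega\prec g_x$, and the Rogosinski bound quoted above gives $\bigl|[z^{n}]\,g_x(\omega(z))\bigr|\le|g_x'(0)|=2$ for every $n\ge1$ and every $x\in\mathbb{T}$. Exchanging the $n$--th Taylor coefficient with the integral --- legitimate since $|g_x(\omega(z))|\le2$ on $|z|\le\tfrac12$ uniformly in $x$, by the Schwarz lemma --- gives $A_n=\int_{\mathbb{T}}[z^{n}]\,g_x(\omega(z))\,d\mu(x)$, whence $|A_n|\le\int_{\mathbb{T}}2\,d\mu(x)=2$.

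The one genuinely delicate point is the choice of representation. A naive attempt --- expanding $\sum A_n z^{n}=-\psi(1+h(\psi))$ with $\psi=-\omega$ and estimating coefficients term by term, or applying the Herglotz formula to $h\circ\psi$ rather than to $h$ --- reduces the task to controlling sup--norms of partial sums of bounded analytic functions, which may grow like a constant times $\log n$, so the constant $2$ is destroyed. Running Herglotz on $h$ is exactly what isolates the lone ``unbounded'' factor while retaining a single fixed Schwarz function $\omega$ (coming from $1+G$) inside a genuinely convex map, so that the convex--subordination estimate applies with constant precisely $2$. The algebraic collapse of the integrand to $2\omega/(1+x\omega)$ and the elementary half--plane property of $g_x$ are routine to verify.
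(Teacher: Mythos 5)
Your proof is correct: the Herglotz moment formula $\gamma_n=\int_{\mathbb{T}}\bigl(\tfrac{1+x}{2}\bigr)^{n}d\mu(x)$, the collapse of the series to $\int_{\mathbb{T}}\tfrac{2\omega(z)}{1+x\omega(z)}\,d\mu(x)$ with $\omega=G/(2+G)$ a Schwarz function, and Rogosinski's coefficient bound for subordination to the convex half-plane maps $u\mapsto 2u/(1+xu)$ all check out, as does the Fubini step. The paper itself offers no proof of this lemma --- it is quoted from Nehari--Netanyahu [Lemma II] --- and your argument is essentially the classical one from that source (there the reduction is phrased via $G\prec 2z/(1-z)$ rather than via the Schwarz function $\omega$, which is equivalent).
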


      It is worth recalling the M\"obius function $\Psi_{\zeta}$ which maps the unit disk onto the unit disk and given by
\begin{equation}\label{MobiousT}
    \Psi_{\zeta}(z)=\frac{z-\zeta}{1-\overline{\zeta}z}, \quad \zeta\in \mathbb{D}.
\end{equation}

\begin{lemma}\label{Cho's lemma}
    \cite[Lemma 2.4]{24} If $p \in\mathcal{P}$, then for some $\zeta_i\in \overline{\mathbb{D}},i\in\{1,2,3\}$,
\begin{equation}\label{p_1}
    p_1=2\zeta_1,
\end{equation}
\begin{equation}\label{p_2}
    p_2=2\zeta_1^2+2(1- \lvert \zeta_1 \rvert^2)\zeta_2,
\end{equation}
\begin{equation}\label{p_3}
    p_3=2\zeta_1^3+4(1- \lvert \zeta_1 \rvert^2)\zeta_1 \zeta_2-2(1- \lvert \zeta_1 \rvert^2)\overline{\zeta_1}\zeta_2^2+2(1- \lvert \zeta_1 \rvert^2)(1- \lvert \zeta_2 \rvert^2)\zeta_3.
\end{equation}

For $\zeta_1,\zeta_2\in\mathbb{D}$ and $\zeta_3\in\mathbb{T}$, there is a unique function $p=L \circ \omega \in\mathcal{P}$ with $p_1,$ $p_2$ and $p_3$ as in (\ref{p_1})-(\ref{p_3}), where
\begin{equation}\label{omega_2(z)}
    \omega(z)=z\Psi_{-\zeta_1}(z\Psi_{-\zeta_2}(\zeta_3 z)),\quad z\in\mathbb{D},
\end{equation}
    that is
    $$p(z)=\frac{1+(\overline{\zeta_2}\zeta_3+\overline{\zeta_1}\zeta_2+\zeta_1)z+(\overline{\zeta_1}\zeta_3+\zeta_1\overline{\zeta_2}\zeta_3+\zeta_2)z^2+\zeta_3 z^3}{1+(\overline{\zeta_2}\zeta_3+\overline{\zeta_1}\zeta_2-\zeta_1)z+(\overline{\zeta_1}\zeta_3-\zeta_1\overline{\zeta_2}\zeta_3-\zeta_2)z^2-\zeta_3 z^3},\quad z\in\mathbb{D}.$$
\end{lemma}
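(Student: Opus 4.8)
The plan is to combine the standard bijection between the Carath\'eodory class and the class of Schwarz functions with three iterations of the Schwarz lemma (the Schur algorithm). Given $p\in\mathcal{P}$, the function $\omega=(p-1)/(p+1)$ lies in $\mathcal{B}_0$; writing $\omega(z)=\sum_{n\ge1}c_nz^n$ and comparing coefficients in the identity $p(z)\big(1-\omega(z)\big)=1+\omega(z)$ gives, after inversion, $p_1=2c_1$, $p_2=2c_2+2c_1^2$ and $p_3=2c_3+4c_1c_2+2c_1^3$. So it suffices to parametrise $c_1,c_2,c_3$.

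To this end I would iterate the Schwarz lemma. Since $\omega(0)=0$, write $\omega(z)=z\psi_1(z)$ with $\psi_1\colon\mathbb{D}\to\overline{\mathbb{D}}$ and put $\zeta_1=\psi_1(0)=c_1\in\overline{\mathbb{D}}$. If $|\zeta_1|<1$, then $\Psi_{\zeta_1}\circ\psi_1$ vanishes at $0$, so $\Psi_{\zeta_1}\big(\psi_1(z)\big)=z\psi_2(z)$ for some $\psi_2\colon\mathbb{D}\to\overline{\mathbb{D}}$, i.e. $\psi_1(z)=\Psi_{-\zeta_1}\big(z\psi_2(z)\big)$; set $\zeta_2=\psi_2(0)$. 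One more step gives $\psi_2(z)=\Psi_{-\zeta_2}\big(z\psi_3(z)\big)$ with $\zeta_3=\psi_3(0)\in\overline{\mathbb{D}}$. Using the expansion $\Psi_{-\zeta}(w)=\zeta+(1-|\zeta|^2)\,w/(1+\overline{\zeta}w)$, substitute these compositions into one another, expand in powers of $z$, and read off $c_1=\zeta_1$, $c_2=(1-|\zeta_1|^2)\zeta_2$ and $c_3=(1-|\zeta_1|^2)(1-|\zeta_2|^2)\zeta_3-(1-|\zeta_1|^2)\overline{\zeta_1}\zeta_2^2$. Feeding these into the formulas of the first paragraph yields (\ref{p_1})--(\ref{p_3}). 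The degenerate cases $|\zeta_1|=1$ or $|\zeta_2|=1$, where the iteration terminates because a holomorphic self-map of $\overline{\mathbb{D}}$ attaining modulus one at an interior point is constant, are absorbed by the same formulas since the factors $1-|\zeta_i|^2$ then vanish.

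For the last assertion, suppose $\zeta_1,\zeta_2\in\mathbb{D}$ and $\zeta_3\in\mathbb{T}$. Running the iteration backwards, $\psi_3$ must equal the unimodular constant $\zeta_3$, hence $\omega$ is forced to be $\omega(z)=z\,\Psi_{-\zeta_1}\big(z\,\Psi_{-\zeta_2}(\zeta_3 z)\big)$, which proves uniqueness; this $\omega$ is a finite Blaschke product mapping $\mathbb{D}$ into $\mathbb{D}$, so $p=L\circ\omega\in\mathcal{P}$ and has $p_1,p_2,p_3$ exactly as above. Substituting this $\omega$ into $L(w)=(1+w)/(1-w)$ and clearing denominators produces the displayed rational expression for $p(z)$.

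The routine but delicate point is the bookkeeping in the second paragraph --- in particular tracking the common factor $1-|\zeta_1|^2$ that multiplies the entire tail of $\Psi_{-\zeta_1}$, which is what produces the term $-2(1-|\zeta_1|^2)\overline{\zeta_1}\zeta_2^2$ in $p_3$ --- together with the closed-form simplification of $L\circ\omega$ in the final step; everything else is a direct application of the Schwarz lemma.
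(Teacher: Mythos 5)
The paper does not prove this lemma; it is quoted verbatim from Cho, Kowalczyk and Lecko \cite{24}, so there is no in-paper argument to compare against. Your Schur-algorithm proof is correct and is essentially the standard derivation of this parametrisation: the coefficient relations $p_1=2c_1$, $p_2=2c_2+2c_1^2$, $p_3=2c_3+4c_1c_2+2c_1^3$ check out, the identity $\Psi_{-\zeta}(w)=\zeta+(1-|\zeta|^2)w/(1+\overline{\zeta}w)$ gives exactly $c_1=\zeta_1$, $c_2=(1-|\zeta_1|^2)\zeta_2$, $c_3=(1-|\zeta_1|^2)(1-|\zeta_2|^2)\zeta_3-(1-|\zeta_1|^2)\overline{\zeta_1}\zeta_2^2$, the degenerate cases are correctly absorbed by the vanishing factors $1-|\zeta_i|^2$, and the uniqueness argument (the hypotheses $|\zeta_1|,|\zeta_2|<1$ let you recover $\zeta_2,\zeta_3$ from $c_2,c_3$, and $|\zeta_3|=1$ forces $\psi_3\equiv\zeta_3$ by the maximum principle) is sound.
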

      Conversely, if $\zeta_1, \zeta_2 \in \mathbb{D}$ and $\zeta_3\in\overline{\mathbb{D}}$ are given, then we can construct a (unique) function $p\in\mathcal{P}$ of the form (\ref{p(z)}) so that $p_i, i\in\left\{ 1,2,3\right\} $, satisfy the identities in (\ref{p_1})-(\ref{p_3}). For this, we define
\begin{equation}\label{omega_3(z)}
    \omega(z)=\omega_{\zeta_1, \zeta_2, \zeta_3} (z)= z \Psi_{-\zeta_1}(z \Psi_{-\zeta_2}(\zeta_3 z)), \quad z\in\mathbb{D},
\end{equation}
    where $\Psi_{\zeta}$ is the function given as in (\ref{MobiousT}). Then $\omega \in \mathcal{B}_0$. Moreover, if we define $p(z)=(1+\omega(z))/(1-\omega(z)), z\in\mathbb{D}$, then $p$ is represented by (\ref{p(z)}), where $p_1$, $p_2$ and $p_3$ satisfy the identities in (\ref{p_1})-(\ref{p_3}) (see the proof of \cite[Lemma 2.4]{24}).

\section{Estimation of the Fifth Coefficient }\label{fifth coeff}

    We begin with the following lemma:
\begin{lemma}\label{M L}
      If $-1< \sigma <1$, then $F(z)=(1+2 \sigma z+ z^2)/(1-z^2)$ belongs to $\mathcal{P}$.
\end{lemma}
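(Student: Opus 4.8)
The plan is to realize $F$ as a convex combination of two elementary Carath\'eodory functions. Factoring the denominator as $1-z^2=(1-z)(1+z)$ suggests looking for constants $\alpha,\beta$ with
$$\alpha\,\frac{1+z}{1-z}+\beta\,\frac{1-z}{1+z}=\frac{\alpha(1+z)^2+\beta(1-z)^2}{1-z^2}=\frac{(\alpha+\beta)+2(\alpha-\beta)z+(\alpha+\beta)z^2}{1-z^2}.$$
Matching the numerator with $1+2\sigma z+z^2$ forces $\alpha+\beta=1$ and $\alpha-\beta=\sigma$, hence $\alpha=(1+\sigma)/2$ and $\beta=(1-\sigma)/2$.

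Next I would observe that for $-1<\sigma<1$ both weights $\alpha$ and $\beta$ are strictly positive and sum to $1$. The function $L(z)=(1+z)/(1-z)$ lies in $\mathcal{P}$ by the remark following \eqref{L(z)}, and $L(-z)=(1-z)/(1+z)$ maps $\mathbb{D}$ onto the same right half-plane, so it is in $\mathcal{P}$ as well. Since $\mathcal{P}$ is convex — indeed $\RE(\alpha p+\beta q)=\alpha\,\RE p+\beta\,\RE q>0$ whenever $\RE p,\RE q>0$ and $\alpha,\beta\ge 0$ — the combination $F(z)=\tfrac{1+\sigma}{2}L(z)+\tfrac{1-\sigma}{2}L(-z)$ satisfies $\RE F(z)>0$ on $\mathbb{D}$. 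It remains to note that $F(0)=\alpha+\beta=1$ and that $F$ is analytic on $\mathbb{D}$, the only poles $z=1$ and $z=-1$ lying on $\mathbb{T}$; together with the normalization this places $F$ in $\mathcal{P}$.

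As an alternative route, one could avoid the decomposition and estimate $\RE F$ directly: writing $z=re^{i\theta}$, multiply numerator and denominator of $F$ by the conjugate of $1-z^2$ and show the resulting real part is positive for $0\le r<1$, or invoke the minimum principle for the harmonic function $\RE F$ by checking its boundary values on $|z|=1$ minus the two poles. However, the convex-combination argument is cleaner and makes these computations unnecessary.

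There is no genuinely hard step here; the only mild subtlety is recognizing the correct pair of building blocks. Once the denominator is factored as $(1-z)(1+z)$ and the numerator $1+2\sigma z+z^2$ is seen to be a linear combination of $(1+z)^2$ and $(1-z)^2$, the representation is essentially forced, and the positivity of the coefficients for $|\sigma|<1$ finishes the argument.
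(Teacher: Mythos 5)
Your argument is correct, but it is not the route the paper takes. The paper forms the associated Schwarz function directly: it computes $\omega(z)=(F(z)-1)/(F(z)+1)=z(z+\sigma)/(1+\sigma z)=z\Psi_{-\sigma}(z)$ and observes that, since $\Psi_{-\sigma}$ is a conformal automorphism of $\mathbb{D}$ for $|\sigma|<1$, one has $|\omega(z)|<1$ and $\omega(0)=0$, whence $F=L\circ\omega\in\mathcal{P}$. You instead write
$$F(z)=\frac{1+\sigma}{2}\,\frac{1+z}{1-z}+\frac{1-\sigma}{2}\,\frac{1-z}{1+z}$$
and invoke the convexity of $\mathcal{P}$ together with $F(0)=1$. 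Both proofs are short and complete; what each buys is slightly different. Your convex-combination argument is the more elementary one (it needs nothing beyond $\operatorname{Re}L>0$ and positivity of the weights), and it also exhibits $F$ as a weighted average of rotations of the half-plane map, which explains structurally why its coefficients satisfy $b_1=b_3=2\sigma$, $b_2=b_4=2$ as used later in (\ref{b_i's}). The paper's argument, on the other hand, is deliberately phrased in terms of the M\"obius maps $\Psi_{\zeta}$ of (\ref{MobiousT}), so that $F$ fits the same $L\circ\omega$ template as the functions $q$ and $h$ constructed from Lemma \ref{Cho's lemma} in the proof of Theorem \ref{main}; that uniformity is the only thing you lose. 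Your closing remarks about the poles on $\mathbb{T}$ and the normalization $F(0)=1$ are the right things to check and are handled correctly.
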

\begin{proof} Let us consider
             $$\omega(z)=\frac{F(z)-1}{F(z)+1}= \frac{z (z+\sigma)}{1+\sigma z}.$$
             From (\ref{MobiousT}), we have
             $$\omega(z)= z \Psi_{-\sigma} (z) , \quad z \in \mathbb{D}.$$
             Since $\Psi_{-\sigma}(z)$ is a conformal automorphism of $\mathbb{D}$, which gives $\lvert  w(z)\rvert <1$ and $w(0)=0$. Therefore $w$ is a Schwarz function and $F\in \mathcal{P}$.
\end{proof}
   To prove our next result, we require the following assumption:
\begin{assumption}
   Let $\varphi(z)$ be   as defined in (\ref{ma-minda}), whose coefficients satisfy the following conditions:
\begin{equation*}
\begin{aligned}
  {\bf{C1}}: \; &   \;\;\;\;\;\;\;\;\;\; \;\;\;\;\;\;  \;\;\;\;\;\; \;\;\;\;\;\;\;\;\;\;\;\; \lvert B_1^2 + 2 B_2 \rvert < 4 B_1^2,  \\
  {\bf C2}:\; & \lvert B_1^3 - B_1^2 B_2 + 18 B_2^2 - 18 B_1 B_3\rvert <3 \lvert ( B_1^2 + 2 B_1 + 2 B_2) (2 B_1^2 -3 B_1  + 3 B_2)\rvert, \\
   {\bf C3}:\; & \lvert 30 B_1^7 - 9 B_1^8 - B_1^6 ( 66 B_2 - 5 ) - 648 B_2^3 + 324 B_2^4 + B_1^5 ( 170 B_2 -126 ) \\
     &- 648 B_2 B_3^2 + B_1^3 (-180 B_2 + 220 B_2^2 + 108 B_3 - 360 B_2 B_3) + B_1 (1296 B_2 B_3 \\
     & - 720 B_2^2 B_3) + 648 B_2^2 B_4 + B_1^4 (108 + 10 B_2 - 175 B_2^2 + 90 B_3 + 162 B_4) \\
     & + B_1^2 ( -144 B_2^2 + 4 B_2^3 + 180 B_2 B_3 - 324 B_3^2 - 648 B_4 + 648 B_2 B_4)\rvert <  8 \lvert 9 B_1^6 \\
     &+ 9 B_1^7 + B_1^4 (-27 + 32 B_2) + B_1^5 (-52 + 63 B_2) + 162 B_2^2 B_3 + B_1^3 (81 - 189 B_2 \\
     & + 164 B_2^2 + 9 B_3) + B_1^2 (18 B_2^2 - 9 B_2 B_3) + B_1 (-162 B_2^2 + 198 B_2^3 - 81 B_3^2)\rvert, \\
     {\bf{C4}}:\; & \;\;\; \;\;\;\;\;\; \;\;\;\;\;\; 0<  (4 B_1^2 + 6( B_2- B_1)/((3 B_1^2+ 6 (B_2-  B_1)) < 1.
\end{aligned}
\end{equation*}
\end{assumption}

\begin{theorem}\label{main}  Let $\varphi(z)$ be   as defined in (\ref{ma-minda}), whose coefficients satisfy the above conditions \textbf{C1} to \textbf{C4}.
   If $f\in\mathcal{S}^*(\varphi)$, then
  $$ \lvert a_5 \rvert \leq \frac{ B_1 }{4}.$$
      The inequality is sharp.
\end{theorem}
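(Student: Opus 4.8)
The plan is to start from the subordination $z f'(z)/f(z) \prec \varphi(z)$, which means there is $\omega \in \mathcal{B}_0$ with $z f'(z)/f(z) = \varphi(\omega(z))$, equivalently a Carath\'eodory function $p = (1+\omega)/(1-\omega) \in \mathcal{P}$ of the form \eqref{p(z)}. First I would expand both sides as power series and compare coefficients to express $a_2, a_3, a_4, a_5$ as polynomials in $p_1, p_2, p_3, p_4$ and in the Ma--Minda coefficients $B_1, B_2, B_3, B_4$; in particular $a_5$ will be a specific combination whose leading term (the one responsible for the extremal value $B_1/4$) comes from $p_1^4$. The target is to show $\lvert a_5 \rvert \le B_1/4$ under the hypotheses \textbf{C1}--\textbf{C4}.

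Next I would reduce the number of free parameters. Since $a_5$ ultimately depends only on $p_1, p_2, p_3$ through the structure of the problem (the $p_4$ contribution can be bounded using the known estimate $\lvert p_4 - \text{(lower-order terms)}\rvert \le 2$ coming from Lemma~\ref{A_4}, or absorbed directly), I would invoke Lemma~\ref{Cho's lemma} to substitute the Schur--Cohn-type parametrization $p_1 = 2\zeta_1$, $p_2 = 2\zeta_1^2 + 2(1-\lvert\zeta_1\rvert^2)\zeta_2$, $p_3 = \cdots$ with $\zeta_i \in \overline{\mathbb{D}}$. By the rotational symmetry of the class $\mathcal{S}^*(\varphi)$ (replace $f(z)$ by $\bar\lambda f(\lambda z)$), I may assume $\zeta_1 = t \in [0,1]$ is real and nonnegative. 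Then $\lvert a_5\rvert$ is bounded by a function of $t$, $\zeta_2$, $\zeta_3$; fixing $t$ and $\zeta_2$, the dependence on $\zeta_3$ is affine-linear in a disk, so the maximum is attained with $\lvert\zeta_3\rvert = 1$, and one is left with a two-variable ($t$ and $\zeta_2 \in \overline{\mathbb{D}}$, or $t$ and $\lvert\zeta_2\rvert$) maximization. Conditions \textbf{C1}, \textbf{C2}, \textbf{C3} are exactly the inequalities on $B_1, B_2, B_3, B_4$ that force the relevant coefficient expressions in this reduced problem to have signs making the maximum occur at the boundary value $t = 1$ (i.e.\ $\zeta_1 \in \mathbb{T}$, $p_1 = 2$), which kills the $\zeta_2, \zeta_3$ dependence and pins $\lvert a_5\rvert$ to $B_1/4$; condition \textbf{C4} identifies where an interior critical point in $t$ would otherwise lie and rules it out (or shows it yields a smaller value).

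The main obstacle I expect is the case analysis in this final optimization: after substitution one gets a messy real-analytic function of $t \in [0,1]$ and $\lvert\zeta_2\rvert \in [0,1]$ with coefficients that are rational/polynomial in the $B_i$, and one must show it is $\le B_1/4$ everywhere. The役 of \textbf{C1}--\textbf{C4} is precisely to control the signs of the coefficients of $t^k$ and of $\lvert\zeta_2\rvert^2$ in this expression so that a triangle-inequality bound followed by monotonicity in each variable closes the argument; verifying that these four conditions are collectively sufficient (no missed case, no sign reversal) is the delicate bookkeeping. Finally, sharpness: I would exhibit the extremal function by taking $\omega(z) = z$ (so $p(z) = (1+z)/(1-z)$, $\zeta_1 \in \mathbb{T}$), i.e.\ the Ma--Minda function $k_\varphi$ defined by $z k_\varphi'(z)/k_\varphi(z) = \varphi(z)$, for which a direct computation gives $\lvert a_5\rvert = B_1/4$; Lemma~\ref{M L} is presumably used along the way to certify that certain auxiliary functions built from $F(z) = (1+2\sigma z + z^2)/(1-z^2)$ lie in $\mathcal{P}$, legitimizing an intermediate reduction.
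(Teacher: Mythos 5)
There is a genuine gap, and it starts with the extremal function. You propose $\omega(z)=z$, i.e.\ the Ma--Minda function $k_\varphi$ with $zk_\varphi'(z)/k_\varphi(z)=\varphi(z)$, and you claim the maximum of $\lvert a_5\rvert$ occurs at $\zeta_1\in\mathbb{T}$ (so $p_1=2$, hence $p(z)=(1+z)/(1-z)$ and $p_n=2$ for all $n$). Writing $a_5=\tfrac{B_1}{8}\bigl(p_4+I_1p_1^4+I_2p_1^2p_2+I_3p_1p_3+I_4p_2^2\bigr)$ as in the paper, this choice gives $a_5=\tfrac{B_1}{8}(2+16I_1+8I_2+4I_3+4I_4)$, which is not $B_1/4$ for general admissible $B_i$. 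The actual extremal configuration is the opposite corner: $p_1=p_2=p_3=0$, $p_4=2$, realized by $p(z)=(1+z^4)/(1-z^4)$, i.e.\ the function $H$ with $zH'(z)/H(z)=\varphi(z^4)$, for which $a_2=a_3=a_4=0$ and $a_5=B_1/4$ exactly. So both your sharpness argument and the target of your optimization (the claim that the maximum occurs at $t=1$) are wrong, and the sign analysis you build on top of that cannot close.

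Second, you have misread what \textbf{C1}--\textbf{C4} are for, and this matters because your whole plan delegates the hard work to them. They are not sign conditions steering a direct maximization over the Schur parameters of $p$; the paper never performs such a maximization (which, for general $\varphi$, is precisely the open problem mentioned in the introduction). Instead the paper matches the functional $I$ with the quantity $A_4$ of the Nehari--Netanyahu Lemma~\ref{A_4}: it constructs two auxiliary functions $q,h\in\mathcal{P}$ (via Lemma~\ref{Cho's lemma} with explicit real parameters $\xi_1,\xi_2,\xi_3$ expressed in the $B_i$, and via Lemma~\ref{M L} with $q(z)=(1+2\sigma z+z^2)/(1-z^2)$) so that $I=A_4$ identically, whence $\lvert I\rvert\le 2$ and $\lvert a_5\rvert\le B_1/4$. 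Conditions \textbf{C1}--\textbf{C3} are exactly the statements $\lvert\xi_1\rvert<1$, $\lvert\xi_2\rvert<1$, $\lvert\xi_3\rvert<1$, and \textbf{C4} is $0<\sigma^2<1$; they certify that the constructed $q$ and $h$ really lie in $\mathcal{P}$. Without this matching device, your reduced three-parameter optimization with coefficients rational in $B_1,\dots,B_4$ is not known to be tractable, and you give no mechanism by which \textbf{C1}--\textbf{C4} (as actually stated) would control its critical points.
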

\begin{proof}
     Suppose $f\in\mathcal{S}^*(\varphi)$, then
      $$\frac{zf'(z)}{f(z)}=\varphi(\omega(z)),$$
      where $\omega\in \mathcal{B}_0$. If we choose $\omega(z)=(p(z)-1)/(p(z)+1)$, where $p\in\mathcal{P}$ given by (\ref{p(z)}), then
     by comparing the coefficients obtained by series expansion of $f(z)$ together with $p(z)$ and $\varphi(z)$ yields
\begin{equation}\label{a_5=I}
    a_5=\frac{B_1}{8}I,
\end{equation}
where
\begin{equation}\label{I}
    I=p_4+I_1 p_1^4+ I_2 p_1^2 p_2+ I_3 p_1 p_3 + I_4 p_2^2,
\end{equation}
      with
\begin{equation}\label{I1}
      \begin{aligned}
      I_1 &= \frac{1}{48 B_1}\bigg( B_1^4 - 6 B_1^3 + 11 B_1^2 +6 B_1^2 B_2 - 6 B_1 + 3 B_2^2 - 22 B_1 B_2 + 18 B_2  \\
                                                                      & - 18 B_3 + 8 B_1 B_3 + 6 B_4\bigg),\\
       \end{aligned}
\end{equation}
\begin{equation}\label{I2}
    I_2 = \frac{3 B_1^3 - 11 B_1^2 + 9 B_1 - 18 B_2 + 11 B_1 B_2 + 9 B_3}{12 B_1}, \quad I_3 =\frac{2 B_1^2  -3 B_1 + 3 B_2}{3 B_1}
\end{equation}
      and
\begin{equation}\label{I3}
       I_4 = \frac{ B_1^2 - 2 B_1  + 2 B_2}{4 B_1}.
\end{equation}  
  Let  $q(z)=1+b_1 z+b_2 z^2+\cdots$ be in $\mathcal{P}$, then by Lemma \ref{L2}, we have
\begin{equation*}
      1+\frac{1}{2}(p(z)-1)*(q(z)-1)=1+\frac{1}{2}\sum_{n=1}^\infty b_n p_n z^n \in \mathcal{P}.
\end{equation*}
      If we assume $h(z)=1+\sum_{n=1}^\infty u_n z^n \in \mathcal{P}$ and take $1+G(z):=1+\frac{1}{2}\sum_{n=1}^\infty b_n p_n z^n$, then Lemma \ref{A_4} gives
       $$\lvert A_4\rvert \leq 2,$$
     where
\begin{equation}\label{eq:16}
    A_4=\frac{1}{2}\gamma_0 b_4 p_4-\frac{1}{4}\gamma_1 b_2^2 p_2^2-\frac{1}{2}\gamma_1 b_1 b_3 p_1 p_3+\frac{3}{8}\gamma_2 b_1^2 b_2 p_1^2 p_2-\frac{1}{16}\gamma_3 b_1^4 p_1^4
\end{equation}
      and for $i\in\{0,1,2,3 \}$, $\gamma_i$'s are given by $\gamma_0=1$,
\begin{equation}\label{gamma_i's}
    \gamma_1=\frac{1}{2}\left(1+\frac{1}{2}u_1\right), \quad \gamma_2=\frac{1}{4}\left(1+u_1+\frac{1}{2}u_2\right), \quad \gamma_3=\frac{1}{8}\left(1+\frac{3}{2}u_1+\frac{3}{2}u_2+\frac{1}{2}u_3\right).
\end{equation}
    So from (\ref{I}) and (\ref{eq:16}), we can observe that if there exist $q, h \in\mathcal{P}$ such that
\begin{align*}
    b_4= 2, \quad  I_1 &= -\frac{1}{128} \left(1+\frac{3}{2}u_1+ \frac{3}{2}u_2+\frac{1}{2} u_3\right) b_1^4, \quad I_2 = \frac{3}{32}\left( 1+u_1+\frac{u_2}{2}\right) b_1^2 b_2,\\
    I_3&=-\frac{1}{4} (1+\frac{u_1}{2})b_1 b_3 \quad \text{and} \quad I_4= -\frac{1}{8} \left( 1+\frac{u_1}{2}\right)b_2^2,
\end{align*}
   then we have
\begin{equation}\label{A_4=I eq:18}
     I= A_4.
\end{equation}
The bound for $\lvert  A_4 \rvert $ can be obtained from  Lemma \ref{A_4}, consequently, we can estimate the bound for $\lvert  I\rvert $ and thus we arrive at the desired bound by using (\ref{a_5=I}).
     To prove the theorem, we  construct the functions $q$ and $h$ in such a way that we obtain (\ref{A_4=I eq:18}).\par
    From Lemma \ref{Cho's lemma}, suppose that the functions $q$ and $h$ are constructed by taking $\zeta_1,\zeta_2\in\mathbb{D}$, $\zeta_3\in\overline{\mathbb{D}}$ and $\xi_1, \xi_2 \in \mathbb{D}$, $\xi_3\in\overline{\mathbb{D}}$ respectively, as follows:
\begin{equation}\label{Pr_1}
    q=L \circ \omega_1 \quad \text{and} \quad  h=L\circ \omega_2,
\end{equation}
   where
\begin{equation}\label{Pr_2}
    \omega_1(z)= z \Psi_{-\zeta_1}(z \Psi_{-\zeta_2}(\zeta_3 z)), \quad  \omega_2(z)= z \Psi_{-\xi_1}(z \Psi_{-\xi_2}(\xi_3 z))
\end{equation}
    and $L(z)$ is given by (\ref{L(z)}). So again from Lemma \ref{Cho's lemma}, the $b_i$'s and $u_i$'s, $i\in\left\{ 1,2,3\right\}$ are given by
\begin{align*}
     b_1 &= 2\zeta_1, \quad  b_2 = 2 \zeta_1^2 + 2(1-\lvert \zeta_1\rvert ^2)\zeta_2,\\
    b_3 &= 2\zeta_1^3 +4(1-\lvert \zeta_1\rvert ^2)\zeta_1 \zeta_2 - 2(1-\lvert \zeta_1\rvert ^2) \overline{\zeta_1} \zeta_2^2 + 2(1-\lvert \zeta_1\rvert ^2)(1-\lvert \zeta_2\rvert ^2) \zeta_3
\end{align*}
    and
\begin{align*}
     u_1 &= 2\xi_1, \quad u_2 = 2 \xi_1^2 + 2(1-\lvert \xi_1\rvert ^2)\xi_2,\\
    u_3 &= 2\xi_1^3 +4(1-\lvert \xi_1\rvert ^2)\xi_1 \xi_2 - 2(1-\lvert \xi_1\rvert ^2) \overline{\xi_1} \xi_2^2 + 2(1-\lvert \xi_1\rvert ^2)(1-\lvert \xi_2\rvert ^2) \xi_3.\\
\end{align*}
  There may be many solutions for the above set of equations. For our purpose, we impose some restrictions on the parameters. We take all $\xi_i \in \mathbb{R}$, then
\begin{equation}\label{u_i's}
\begin{aligned}
     u_1 &= 2\xi_1, \quad u_2 = 2 \xi_1^2 + 2(1- \xi_1^2)\xi_2,\\
    u_3 &= 2\xi_1^3 +4(1- \xi_1^2)\xi_1 \xi_2 - 2(1-\xi_1^2) \xi_1 \xi_2^2 + 2(1-\xi_1^2)(1-\xi_2^2) \xi_3.
\end{aligned}
\end{equation}
  Further, if we define
\begin{equation*}\label{zeta_i's}
\begin{aligned}
      \xi_1 &= -\frac{B_1^2 + 2 B_2}{2 B_1}, \quad
      \xi_2 = \frac{B_1^3 - B_1^2 B_2 + 18 B_2^2 - 18 B_1 B_3}{3 \left(B_1^2+ 2 B_1 + 2 B_2 \right) \left(2 B_1^2 -3 B_1  + 3 B_2 \right)},\\
    \xi_3 = &\bigg(- 9 B_1^8 + 30 B_1^7  - B_1^6 \left(66 B_2- 5 \right) + 2 B_1^5 \left( 85 B_2 - 63 \right) +4 B_1^3 (5 B_2 ( 11 B_2 \\
            &- 18 B_3 -9 ) +27 B_3 ) +4 B_1^2 \left( B_2^3 -36 B_2^2 - 81 B_3^2  + 45 B_2 B_3 + 162 \left( B_2 -1 \right) B_4 \right)\\
            &- 144 B_1 \left(5 B_2 - 9 \right) B_2 B_3 +324 B_2 (-2 B_3^2 + B_2 \left( \left(B_2 -2  \right) B_2 + 2 B_4\right) )\\
            & + 18 B_1^4 \left( 9 B_4 + 5 B_3 + 6 \right) - 5 B_1^4 B_2 ( 35 B_2 -2 ) \bigg) \bigg/ \bigg(8 (3 B_1^4 + 2 B_1^3 + 18 B_2^2 \\
            &+ B_1^2 (10 B_2 - 9 ) - 9 B_1 B_3) (B_1 ( 3 B_1^2 + B_1  + 11 B_2 -9) + 9 B_3) \bigg),
\end{aligned}
\end{equation*}
  then the conditions \textbf{C1}, \textbf{C2} and \textbf{C3} on the coefficients $B_1, B_2, B_3$ and $B_4$ yield
      $$\lvert \xi_1\rvert <1, \quad \lvert \xi_2\rvert <1, \quad \lvert \xi_3\rvert <1$$
  respectively.
  Using these $\xi_i$'s in (\ref{u_i's}), we can obtain $u_i$'s, which in turn by using (\ref{gamma_i's}) gives
\begin{equation}\label{gamma_i's eq:22}
\begin{aligned}
    \gamma_1  =& \frac{1}{4} \left(2 - B_1 - \frac{2 B_2}{B_1}\right),\\
     \gamma_2 =&\frac{\left( B_1^2 + 2 B_2 -2 B_1 \right) \left(3 B_1^3 - 11 B_1^2 + B_1 \left( 11 B_2 + 9 \right) +
   9 \left( B_3 - 2 B_2 \right) \right)}{24 B_1 \left(  2 B_1^2 + 3 B_2 -3 B_1 \right)},\\
   \gamma_3 =&-\frac{1}{64 B_1 \left(2 B_1^2 + 3 B_2 - 3 B_1 \right)^2}\bigg(3 \left( B_1^2 + 2 B_2 -2 B_1  \right)^2 ( B_1^4 -6 B_1^3 + B_1^2 (6 B_2 \\
             &+ 11 )  + B_1 \left( 8 B_3 - 22 B_2 -6 \right) + 3 \left( B_2^2 + 6 B_2  - 6 B_3 + 2 B_4 \right) )\bigg).
\end{aligned}
\end{equation}
   Let us consider
\begin{equation*}
    q(z)=\frac{1+2 \sigma z+z^2}{1-z^2},
\end{equation*}
    with $\sigma=\sqrt{(4 B_1^2 + 6( B_2- B_1)/((3 B_1^2+ 6 (B_2-  B_1))}$, then
\begin{equation}\label{b_i's}
    b_1=b_3=2\sigma \quad \text{and}\quad  b_2=b_4=2.
\end{equation}
    If we choose $B_1$ and $B_2$ such that $0<\sigma <1$, which is equivalent to condition \textbf{C4}. Then by Lemma~\ref{M L}, we have  $q\in\mathcal{P}$.
     On putting the values of $b_i's$ and $\gamma_i's$  obtained  from (\ref{gamma_i's eq:22})  and (\ref{b_i's}) respectively in (\ref{eq:16}), we get (\ref{A_4=I eq:18}), which together with (\ref{a_5=I}) gives the desired bound for $\lvert a_5\rvert $.

     Let the function $ H :\mathbb{D}\longrightarrow \mathbb{C}$ be given by
     $$ H(z)=z \exp{\int_0^{z} \frac{\varphi(t^4)-1}{t}dt}=z+ \frac{B_1}{4}z^5 + \frac{1}{32}(B_1^2+ 4 B_2)z^9+\cdots,$$
     where coefficients of $\varphi(z)$ satisfy the conditions \textbf{C1} to \textbf{C4}. Then $H(0)=0$, $H'(0)=1$ and $z H'(z)/H(z)=\varphi(z^4)$ and hence the function $H\in \mathcal{S}^*(\varphi)$, proving the result to be sharp for the function $H$.\end{proof}
    Choose $\varphi(z)=1+\sin{z}$, whose coefficients satisfy \textbf{C1} to \textbf{C4}, to obtain  the following result:
\begin{corollary}
      If $f \in\mathcal{S}^*_{\sin}$, where $\varphi(z)=1+\sin{z}$, then
     $$\lvert a_5\rvert \leq \frac{1}{4}.$$
     The bound is sharp.
\end{corollary}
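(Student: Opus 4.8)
The plan is to obtain the corollary as a direct specialization of Theorem~\ref{main} to $\varphi(z)=1+\sin z$, so the entire task reduces to (a) reading off the Taylor coefficients of this particular $\varphi$ and (b) checking that they satisfy \textbf{C1}--\textbf{C4}. First I would expand $\varphi$: since $\sin z = z - z^3/6 + z^5/120 - \cdots$, comparison with (\ref{ma-minda}) gives
\[
B_1 = 1, \quad B_2 = 0, \quad B_3 = -\tfrac{1}{6}, \quad B_4 = 0.
\]
I would also record that $1+\sin z$ is analytic and univalent on $\mathbb{D}$, has positive real part, and maps $\mathbb{D}$ onto a domain starlike with respect to $\varphi(0)=1$ and symmetric about the real axis; these are precisely the Ma--Minda hypotheses under which Cho et al.~\cite{Cho2} introduced $\mathcal{S}^*_{\sin}$, so that $\mathcal{S}^*_{\sin}=\mathcal{S}^*(\varphi)$ falls within the scope of Theorem~\ref{main}.

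Next I would substitute these four values into each inequality of the Assumption. Condition \textbf{C1} becomes $|B_1^2+2B_2|=1<4=4B_1^2$. Condition \textbf{C2} becomes $|1+3|=4<3\,|(3)(-1)|=9$. For \textbf{C4} the ratio is $(4-6)/(3-6)=2/3\in(0,1)$, and in particular $\sigma=\sqrt{2/3}\in(0,1)$, so Lemma~\ref{M L} applies to the extremal $q$. The one genuinely laborious check is \textbf{C3}, but here the vanishing $B_2=B_4=0$ annihilates every monomial containing $B_2$ or $B_4$, so the left-hand polynomial collapses to $30-9+5-126-18+93-9=-34$, of absolute value $34$, while the right-hand bracket collapses to $9+9-27-52+\tfrac{159}{2}-\tfrac{9}{4}=\tfrac{65}{4}$, giving $34<8\cdot\tfrac{65}{4}=130$. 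Thus all four conditions hold.

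With \textbf{C1}--\textbf{C4} verified, Theorem~\ref{main} applies verbatim and yields $|a_5|\le B_1/4 = 1/4$. Sharpness is inherited from the theorem: the extremal function
\[
H(z) = z\exp\int_0^z \frac{\varphi(t^4)-1}{t}\,dt = z + \tfrac{1}{4}z^5 + \cdots
\]
(obtained from $\sin(t^4)/t = t^3 - t^{11}/6 + \cdots$) lies in $\mathcal{S}^*_{\sin}$ and attains $|a_5|=1/4$. The only delicate point is the bookkeeping in \textbf{C3}; once one exploits that $B_2$ and $B_4$ vanish, that verification and the remaining ones are short arithmetic, and no new analytic input beyond Theorem~\ref{main} is required.
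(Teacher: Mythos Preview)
Your proposal is correct and follows exactly the paper's route: the corollary is obtained by specializing Theorem~\ref{main} to $\varphi(z)=1+\sin z$, so the only work is to verify \textbf{C1}--\textbf{C4} for $B_1=1$, $B_2=0$, $B_3=-1/6$, $B_4=0$, and your arithmetic for all four conditions (including the collapse of \textbf{C3} to $34<130$) is accurate. The paper itself merely asserts that these conditions hold without displaying the check, so your write-up is in fact more detailed than the original.
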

   For all below mentioned choices of $\varphi$ conditions \textbf{C1} to \textbf{C4} are valid. Therefore, the  bounds of $\lvert a_5\rvert $ for some of the known classes are obtained from our result as a special case.
\begin{remark}
\begin{enumerate}
    \item  If $\varphi(z)=2/(1+e^{-z})$, then $f\in \mathcal{S}^*_{SG}$ and $\lvert a_5\rvert \leq 1/8$ \cite[Theorem\; 4.1]{9}.
    \item  If $\varphi(z)=\sqrt{1+z}$, then $f\in \mathcal{S}^*_L$ and $\lvert  a_5 \rvert  \leq 1/8$ \cite[Theorem\; 3.1]{5}.
    \item  If $\varphi(z)=\sqrt{1+b z}$, then $f\in \mathcal{S}^*_{q_b}$, and $\lvert a_5\rvert \leq b/8$, where $b\in (0,1]$ \cite[Theorem\; 3.1]{19}.
    \item  If $f\in\mathcal{S}^*_{RL}$, then $\lvert  a_5 \rvert \leq \left(5-3 \sqrt{2}\right)/8$ \cite[Theorem\; 3.1]{5}.
    \item  If $\varphi(z)=((1+z)/(1-z))^\delta $, then the conditions of Theorem \ref{main} are satisfied only for $0<\delta\leq \delta_0 \approx 0.350162$. Therefore, $\lvert  a_5 \rvert \leq \delta/2$  for  $f\in S^*(\varphi)$ where $0<\delta \leq \delta_0$ \cite{39}.
\end{enumerate}
\end{remark}
\begin{theorem} Let $\varphi(z)$ be   as defined in (\ref{ma-minda}), whose coefficients satisfy the conditions \textbf{C1} to \textbf{C4}.
   If $f\in\mathcal{C}(\varphi)$, then
  $$ \lvert a_5 \rvert \leq \frac{ B_1 }{20}.$$
      The inequality is sharp.
\end{theorem}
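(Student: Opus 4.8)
The plan is to deduce the convex case from the starlike case already handled in Theorem \ref{main}, using the Alexander-type correspondence between $\mathcal{C}(\varphi)$ and $\mathcal{S}^*(\varphi)$. Recall that $f\in\mathcal{C}(\varphi)$ if and only if $g(z):=z f'(z)$ belongs to $\mathcal{S}^*(\varphi)$, because
$$\frac{z g'(z)}{g(z)}=1+\frac{z f''(z)}{f'(z)}.$$
First I would write $f(z)=z+\sum_{n\ge 2}a_n z^n$, so that $g(z)=z+\sum_{n\ge 2} n a_n z^n$; in particular the fifth Taylor coefficient of $g$ equals $5 a_5$.

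Next, since $\varphi$ is assumed to satisfy conditions \textbf{C1}–\textbf{C4}, Theorem \ref{main} applies verbatim to $g\in\mathcal{S}^*(\varphi)$ and gives $\lvert 5 a_5\rvert\le B_1/4$, that is, $\lvert a_5\rvert\le B_1/20$. No new computation involving the functionals $I_1,\dots,I_4$ or the parameter choices $\xi_i,\zeta_i$ is needed here: the hypotheses on $\varphi$ are exactly those of Theorem \ref{main}, so the bound transfers immediately through the dilation $a_5\mapsto 5a_5$.

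For sharpness I would exhibit the convex analogue of the extremal function $H$ constructed at the end of the proof of Theorem \ref{main}. Define
$$G(z)=\int_0^z \frac{H(t)}{t}\,dt, \qquad H(z)=z\exp\left(\int_0^z \frac{\varphi(t^4)-1}{t}\,dt\right).$$
Then $z G'(z)=H(z)$, hence $1+z G''(z)/G'(z)=z H'(z)/H(z)=\varphi(z^4)$, so $G\in\mathcal{C}(\varphi)$. Moreover, from the expansion $H(z)=z+\tfrac{B_1}{4}z^5+\tfrac{1}{32}(B_1^2+4B_2)z^9+\cdots$ one obtains $G(z)=z+\tfrac{B_1}{20}z^5+\cdots$, so the bound $B_1/20$ is attained by $G$.

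There is no genuine analytic obstacle beyond what was already overcome in Theorem \ref{main}; the only thing requiring care is the bookkeeping — verifying that the map $f\leftrightarrow g$ is a bijection between $\mathcal{C}(\varphi)$ and $\mathcal{S}^*(\varphi)$ and that it carries the coefficient $a_5$ to precisely $5a_5$ (so that the factor $1/5$, and not some other constant, appears), after which the estimate and its sharpness are immediate. If one instead wanted a self-contained proof not invoking the Alexander correspondence, the main difficulty would be re-deriving the analogue of the representation $a_5=(B_1/8)I$ with its functional $I$ for the convex class, which is exactly the work that the reduction lets us avoid.
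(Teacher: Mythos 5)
Your proof is correct, but it takes a different (and more economical) route than the paper. The paper proves the convex case from scratch: it compares coefficients in $1+zf''(z)/f'(z)=\varphi((p(z)-1)/(p(z)+1))$ to obtain the representation $a_5=\tfrac{B_1}{40}I$ with the \emph{same} functional $I=p_4+I_1p_1^4+I_2p_1^2p_2+I_3p_1p_3+I_4p_2^2$ as in the starlike case, and then reruns the argument of Theorem \ref{main} to get $\lvert I\rvert\le 2$; sharpness is shown via the function defined by $1+zH''(z)/H'(z)=\varphi(z^4)$. You instead invoke the Alexander correspondence $f\in\mathcal{C}(\varphi)\iff g=zf'\in\mathcal{S}^*(\varphi)$, note that the fifth coefficient of $g$ is $5a_5$, and read the bound $\lvert 5a_5\rvert\le B_1/4$ directly off Theorem \ref{main}; your extremal $G$ with $zG'(z)=H(z)$ is exactly the Alexander transform of the paper's starlike extremal and has fifth coefficient $B_1/20$, matching the paper's. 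Both arguments rest on the same analytic core (the bound $\lvert I\rvert\le 2$ established in Theorem \ref{main}), but your reduction eliminates the need to re-derive the coefficient identity $a_5=\tfrac{B_1}{40}I$ and makes the factor $1/5$ transparent, whereas the paper's self-contained computation has the minor virtue of exhibiting that the convex-class functional $I$ is literally identical to the starlike one. Your argument is complete as stated; the only bookkeeping worth writing out explicitly is the one-line identity $zg'(z)/g(z)=1+zf''(z)/f'(z)$, which you already supply.
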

\begin{proof}
       Since $f\in \mathcal{C}(\varphi)$, therefore
\begin{equation}\label{new}
       1+\frac{z f''(z)}{f'(z)}=\varphi ({(p(z)-1)}/{(p(z)+1)} ),
\end{equation}
      where $p \in \mathcal{P}$ is given by (\ref{p(z)}). By comparison of the coefficients of $z$, $z^2$, $z^3$ in (\ref{new}) with the series expansion of $f$, $\varphi$ and $p$, we get
\begin{equation}\label{new2}
       a_5=\frac{B_1}{40}I,
\end{equation}
     where
      $$  I=p_4 + I_1 p_1^4 + I_2 p_1^2 p_2 + I_3 p_1 p_3 + I_4 p_2^2, $$
      with $I_1$, $I_2$, $I_3$ and $I_4$ given as in (\ref{I1}), (\ref{I2}) and (\ref{I3}). Using the same method as in Theorem \ref{main}, we obtain
      $$\lvert I \rvert \leq 2,$$
      when $B_1$, $B_2$, $B_3$ and $B_4$ satisfy all the conditions \textbf{C1}, \textbf{C2}, \textbf{C3} and \textbf{C4}. Thus bound of $\lvert a_5 \rvert$ follows from (\ref{new2}).

      Let $H(z)=z+a_2 z^2 +a_3 z^3+\cdots\in \mathcal{S}$ be given by
      $$ 1+ \frac{z H''(z)}{H'(z)}=\varphi(z^4),$$
      where coefficients of $\varphi(z)$ satisfy the conditions \textbf{C1} to \textbf{C4}. Clearly, $H \in \mathcal{C}(\varphi)$ and for the function $H$, we have $a_2=a_3=a_4=0$ and $a_5=B_1/20.$ Thus bound is sharp for $H.$
\end{proof}
\section*{Declarations}
\subsection*{Funding}
The work of the Surya Giri is supported by University Grant Commission, New Delhi, India,  under UGC-Ref. No. 1112/(CSIR-UGC NET JUNE 2019).
\subsection*{Conflict of interest}
	The authors declare that they have no conflict of interest.
\subsection*{Author Contribution}
    Each author contributed equally to the research and preparation of the manuscript.
\subsection*{Data Availability} Not Applicable.
\noindent

\end{document}